\DeclareMathOperator{\diff}{Diff}
\newcommand{\norm}[1]{\| #1\|}
\def\a{          \alpha}
\def\cB{          \mathcal B}
\def\clr{   \color{red}}
\def\clb{   \color{black}}
\def\cE{             \mathcal E}
\def\vol{     {\rm Vol}}
\let\cal\mathcal
\def \R{{\mathbb R}}
\def \Z{{\mathbb Z}}
\def \N{{\mathbb N}}
\def \ta{{\tilde{\alpha}}}
\def \rank{{\rm rank}}
\newcommand{\T}{{\mathbb T}}
\newcommand{\prf}{{\begin{proof}}}
\newcommand{\epf}{{\end{proof}}}
\newcommand{\ary}{\begin{eqnarray}}
\newcommand{\eary}{\end{eqnarray}}
\newcommand{\aryst}{\begin{eqnarray*}}
\newcommand{\earyst}{\end{eqnarray*}}
\newcommand{\enmt}{\begin{enumerate}}
\newcommand{\eenmt}{\end{enumerate}}
\newtheorem{theo}{\sc Theorem}
\newtheorem{prop}{\sc Proposition}
\newtheorem{claim}{\sc claim}
\newtheorem{lemma}{\sc lemma}
\newtheorem{cor}{\sc corollary}
\newtheorem{conj}[theo]{\sc Conjecture}
\theoremstyle{definition}
\def\bee{\begin{equation}}
\def\eee{\end{equation}}
\newtheorem{defi}{\sc Definition}
\theoremstyle{rema}
\newcommand{\Diff}{\text{Diff}}
\newcommand{\pdvr}[2]
{\dfrac{\partial^{#2} #1}{\partial \theta^{#2_1} \partial r^{#2_2}}}
\newcommand{\pdvrs}[2]
{\partial^{#2} #1 /\partial \theta^{#2_1} \partial r^{#2_2}}
\newtheorem{thm}{\sc Theorem}
\numberwithin{equation}{section}
\author{Aaron Brown}
\thanks{Brown was supported by NSF No.1752675}
\address[Brown]{Northwestern University. Evanston, IL 60208, USA}
\email{awb@northwestern.edu}
\author{Danijela Damjanovi\'c}
\thanks{ Damjanovi\'c was supported by Swedish Research Council grant VR2015-04644}
\address[Damjanovi\'c]{Department of Mathematics, Kungliga Tekniska högskolan, Lindstedtsvägen 25, SE-100 44 Stockholm, Sweden} 
\email{ddam@kth.se}
\author{Zhiyuan Zhang}
\thanks{ Zhang was supported by the National Science Foundation under Grant No. DMS-1638352. Zhang also thanks the support and hospitality of IAS and KTH since part of this work is done during his postdoc there}
\address[Zhang]{
	Institut Galil\'ee
	Universit\'e Paris 13, CNRS UMR 7539,
	93430 - Villetaneuse, France
}
\email{zhiyuan.zhang@math.univ-paris13.fr}
\subjclass[2020 Mathematics Subject Classification]{37C85, 22E40} 
\keywords{Lattice actions, Zimmer's conjecture, rigidity}
\begin{document}

\title[$C^1$ lattice actions]{$C^1$ actions on manifolds by lattices in Lie groups}

\date{\today}
\maketitle

\begin{abstract} 
In this paper we study Zimmer's conjecture for $C^1$ actions of lattice subgroup of a higher-rank simple Lie group with finite center  on compact manifolds.  We show that when the rank of an uniform lattice is larger than the dimension of the manifold, then the action factors through a finite group.
For lattices in $SL(n, \R)$, the dimensional bound is sharp.   
\end{abstract}

\section{Introduction}

Zimmer's conjecture for actions of higher-rank lattice on compact manifolds says that if the group is large with respect to the dimension of the manifold, then any such action should factor through a finite group. This conjecture is motivated by a long history of research, including the local rigidity results of Selberg \cite{Sel} and Weil \cite{Weil} on linear representation theory, the global rigidity results of Mostow \cite{Mostow}, the superrigidity theorem of Margulis \cite{Mar}, and the  cocycle superrigidity theorem of Zimmer \cite{Zim}. Since its introduction, Zimmer's conjecture has attracted considerable interests. 

For $C^0$ actions on the circle, the above conjecture is confirmed by Lifschitz, Witte Morris \cite{LiWi, Wit} for many non-unifrorm lattices. 
For $C^1$ actions on the circle, Burger-Monod \cite{BM} and Ghys \cite{Ghys} showed similar results for many other cases, including all lattices in higher rank simple Lie groups. 
 For $C^1$ area preserving actions on closed orientable surface with genus at least $2$, Zimmer's conjecture is proved by 
  Polterovich \cite{Pol} for non-uniform lattices. His result is then generalised by Franks-Handel in \cite{FH} to any $C^1$ action which preserves a Borel measure.  For analytic actions,  Ghys \cite{Ghys} studied the case where the manifold is a circle; Farb-Shalen \cite{FS} studied this conjecture under additional assumptions on the group and the manifold. For a very detailed survey on other  earlier results on Zimmer's program, we refer the readers to \cite{Fis2}.
  
  In recent breakthrough \cite{BFH1, BFH2}, Brown-Fisher-Hurtado proved the $C^2$ version\footnote{Their result can be improved with a bit more work to include $C^{1+\epsilon}$-actions.} of Zimmer's conjecture for all co-compact lattices \footnote{These results are generalized recently in \cite{BFH3} to all non-uniform lattices.} in real split simple Lie group and ${\rm SL}(n,\Z)$ using some previous progress made by Brown-Rodriguez Hertz-Wang in \cite{BRHW1,BRHW2} and Lafforgue, de Laat and de la Salle in \cite{La, dLdl, dlS}.
We refer the reader to Fisher's paper \cite{Fis1} for an excellent survey of the history and recent progress on Zimmer's conjecture.
The purpose of the present paper is to extend the results in \cite{BFH1, BFH2, BFH3} to $C^1$ actions, when the rank of the acting group is sufficiently large.

Compared to the previous results, there are 2 new ideas here. First is that while many results in Non-uniform Hyperbolic Theory fail or remain unknown in the $C^1$ setting, some of them continue to hold under the presence of suitable continuous splitting. In our case, we can apply a variant of Avila-Viana's invariance principle to an element in the kernel of all Lyapunov functionals to obtain the extra invariance needed to conclude the proof. For $C^2$ action, the idea to use action by an element in the  kernel of all fiberwise exponents was originally due to Sebastian Hurtado and appears in the Bourbaki notes of Cantat \cite{Can}. 
The second one is that we use the information extracted  by using strong property (T) to control the $L^p$ norms of the derivatives for sufficiently large $p$. This allows us to show that $C^1$ action is uniformly bounded under certain H\"older norm.  Then we use the resolution of the Hilbert-Smith conjecture for sufficiently H\"older actions to conclude the proof.

\section{Statement of the main results}

We first recall the statement of Zimmer's conjecture.

For a real semisimple Lie group $G$ with Lie algebra $\mathfrak{g}$, let
\begin{itemize}
	\item $v(G)$ denote the minimal codimension of proper parabolic subalgebras of $\mathfrak{g}$;
	\item $d(G)$ denote the minimal codimension of proper subalgebras of the compact real form of $\mathfrak{g}_\mathbb{C}$;
	\item $n(G)$ denote the minimal dimension of nontrivial real representations of $\mathfrak{g}$.
\end{itemize}
It is proved in \cite{St} that $v(G)<n(G)$.\footnote{We thank Jinpeng An for this remark.} 
\begin{conj}\label{C:Zimmer}
	Let $G$ be a connected real semisimple Lie group with finite center and without almost-simple factors of real rank less than $2$. Let $\Gamma < G$ be a lattice, $M$ be a compact manifold, $\alpha:\Gamma\to\Diff(M)$ be an action.
	\begin{itemize}
		\item[(1)] If $\dim(M)<v(G)$, then $\alpha$ preserves a Riemannian metric.
		\item[(2)] If $\dim(M)<\min\{v(G),d(G)\}$, then $\alpha(\Gamma)$ is finite.
		\item[(3)] If $\dim(M)<n(G)$ and $\alpha$ preserves a volume density, then $\alpha$ preserves a Riemannian metric.
		\item[(4)] If $\dim(M)<\min\{n(G),d(G)\}$ and $\alpha$ preserves a volume density, then $\alpha(\Gamma)$ is finite.
	\end{itemize}
\end{conj}

The main result of this paper is the following generalisation of the results in \cite{BFH1, BFH3} to $C^1$ regularity.


\begin{thm}\label{thm small action general}
	Let $M$ be a compact manifold.
	Let $G$ be an almost simple real Lie group with finite center and with real-rank at least $2$, and let $\Gamma < G$ be a lattice.
	Let   $\alpha:  \Gamma \to  \diff^1(M)$ be a  group homomorphism.
	Assume that $\Gamma$ is an uniform lattice or $\Gamma = SL(n, \Z)$, and assume either that $\dim M < \min(\rank_\R(G), d(G))$, or that $\dim M \leq \min(\rank_\R(G), d(G)-1)$ and $\alpha(\Gamma) \subset \diff^1(M, {\rm vol})$.
	Then $\alpha$ has finite image.
\end{thm}

Compared to the main result in \cite{BFH1, BFH3} for almost-simple real Lie groups, in Theorem \ref{thm small action general} we have posed a different requirement on the dimension of the manifold. Indeed, we can deduce from \cite[Theorem 2.7]{BFH1} that for a group homomorphism $\a : \Gamma \to \diff^2(M)$, the conclusion of Theorem \ref{thm small action general} is true if $\rank_\R G$ is replaced by the \textit{minimal resonant codimension} $r(G)$ (see \cite[Definition 2.1]{BFH1}). 
We remark that under the conditions of  Theorem \ref{thm small action general}, we always  have that 
\aryst
r(G) \geq \rank_\R G.
\earyst

\begin{cor}\label{thm finite action}
Let $M$ be a compact manifold.  Let $\Gamma < G$ be a lattice. Let  $\alpha: \Gamma \to \diff^1(M)$ (resp. $\diff^1(M, {\rm vol})$) be a  group homomorphism.
Assume that $\Gamma$ is an uniform lattice or $\Gamma = SL(n, \Z)$, and assume 
that one of the following is true:
\enmt
\item $G = SL(n, \R)$, $\dim M < n-1$  (resp. $\leq n-1$) and $n \geq 3$;
\item $G = Sp(2n, \R)$, $\dim M < n$ (resp. $\leq n$) and $n \geq 2$;
\item $G = SO(n, n)$, $\dim M < n$  (resp. $\leq n$) and $n \geq 4$;
\item $G = SO(n, n+1)$, $\dim M < n-1$ (resp. $\leq n-1$) and $n \geq 3$.
\eenmt
Then $\alpha$ has  finite image.  
\end{cor} 

When $\a$ is a $C^2$ action, the conclusion of  Theorem \ref{thm small action general} is already obtained in \cite{BFH1, BFH3}.  
Moreover, when $G = Sp(2n, \R)$, $SO(n, n)$ or $SO(n,n+1)$, the dimension bound in Corollary \ref{thm finite action} is not optimal.
However, when $G = SL(n, \R)$, we have
\aryst
r(G) = \rank_\R G = n-1.
\earyst
By considering the  actions of $SL(n, \R)$ by projective transformations on $\mathbb{P}(\R^n)$, and by the affine transformations on $\T^n$, we see that  Corollary \ref{thm finite action} has optimal bounds for $G = SL(n, \R)$. We note the for $C^0$ action by $SL(n,\Z), (n \geq 3)$  on compact manifold with $\chi(M) \neq 0 \mod 3$, the finite image property of $\a$ is proved by Ye in \cite{Ye}.

%
%


%
%

The proofs of the results in this paper follow closely the strategy in \cite{BFH1}. We recommend the reader to have this paper close at hand as we make many references to these works, although we also repeat some of the main arguments for reader's convenience. Below we first describe the general strategy of the proofs in  \cite{BFH1,BFH2, BFH3}, and then we point out the main new ideas and modifications we make here in order to obtain results in $C^1$ regularity.

\section{Review of the work of Brown, Fisher and Hurtado, and outline of the proof}

\noindent{\bf Step 1: Uniform subexponential growth.}

We fix a finite set of symmetric generators  for $\Gamma$, denoted by $S =\{\gamma_i\}$. 
For any $\gamma \in \Gamma$, we let $\ell(\gamma)$ denote the word-length distance from $\gamma$ to the identity relative to $S$. In other words, $\ell(\gamma)$ is the smallest integer $k$ such that $\gamma$ may be represented by a product $\zeta_1 \cdots \zeta_k$ where $\zeta_j \in S$ for each $1 \leq j \leq k$.

We first recall the following notion.
\begin{defi}
Let $\a : \Gamma \to \diff^{1}(M)$ be an action of $\Gamma$ on a compact manifold $M$ by $C^1$ diffeomorphisms. We fix a background $C^\infty$ Riemannian metric on $M$. We say that $\a$ has \textit{uniform subexponential growth of derivatives} if for every $\varepsilon > 0$ there is a constant  $C_\varepsilon > 0$ such that for all $\gamma \in \Gamma$ we have
\aryst
\norm{D\a(\gamma)} \leq C_{\varepsilon} e^{\varepsilon \ell( \gamma)}.
\earyst
It is clear that the above definition is independent of the choice of the metric on $M$ or the generating set $S$.
\end{defi}

The main result of Step 1 is the following.

\begin{prop}\label{thm small derivates}
Let $M$ be a  compact manifold, and let $G$ be a connected, almost-simple real Lie group with finite center and whose real-rank is at least $2$. Let $\Gamma < G$ be a lattice. 
	Let   $\alpha:  \Gamma \to  \diff^1(M)$ be a  group homomorphism.
Assume $\Gamma$ is an uniform lattice or $\Gamma = SL(n, \Z)$, and assume either that $\dim M <	\rank_\R(G)$, or that $\dim M \leq \rank_\R(G)$ and $\alpha(\Gamma) \subset \diff^1(M, {\rm vol})$.
Then $\alpha$ has  uniform subexponential growth of derivatives.
\end{prop}

We prove Proposition \ref{thm small derivates} following the same scheme in \cite{BFH1}.
As in \cite{BFH1}, we define the suspension space $M^\a$ as the quotient of $G \times M$ by  $\Gamma$-action $(g,x) \mapsto (g\gamma, \a(\gamma^{-1})x)$.  
We recall that $M^\a$ is a fiber bundle over $G / \Gamma$ with fibers modeled on $M$. Moreover $M^\a$ is equipped with a left $G$-action, denoted by $\tilde \a$, by diffeomorphisms which preserves the foliation into fibers.
We present the construction of $M^\a$ and its further properties in Section \ref{sec: Suspension constuction and its properties}.

As the $G$-action preserves the foliation into fibers of $M^\a$, we may consider the restriction of $D\ta$ to the subbundle $E^F :={\rm Ker}(D\pi)$ tangent to the fibers of $M^\a$. 
Let $A$ be the maximal split torus of $G$, and let $\mu$ be an $A$-ergodic $A$-invariant measure on $M^\a$.
We can associate to $\mu$ and the derivative $A$-cocycle $D\ta|_{E^F}$ a set of \textit{fiberwise Lyapunov functionals} $\lambda^F_i : Lie(A) \to \R$, $1 \leq i \leq k$ by the higher-rank Oseledec's theorem (see, e.g., \cite[Part I, Theorem 2.4]{BRHW1}). We refer the reader to \cite[Proposition 3.3]{BFH1} for the definition and properties of Lyapunov functionals.
The \textit{maxmal fiberwise Lyapunov exponent} for $a \in A$ with respect to an $a$-invariant probability measure $\mu$ is defined as
\aryst
\lambda^F_+(a,\mu) = \inf_{n \to \infty}\frac{1}{n} \int \log \norm{D\ta(a^n) |_{E^F(x)}} d\mu(x).
\earyst

We have the following.
\begin{prop}\label{prop. prop3.7inbfn1}
	Suppose that $\Gamma$ is an uniform lattice or $\Gamma = SL(n, \Z)$, and $\a$ fails to have uniform subexponential growth of derivatives. 
There exists an $s \in A$ and an $A$-invariant Borel probability measure $\mu$ on $M^\a$ with $\lambda^F_{+}(s, \mu) > 0$
such that $\pi_*\mu$ is the Haar measure on $G / \Gamma$.
\end{prop}
When $\Gamma$ is an uniform lattice, the above proposition is just \cite[Proposition 3.7]{BFH1}.
When $\Gamma= SL(n, \Z)$, the above proposition is proved in \cite{BFH2} even though it is not explicitly stated as a single proposition. Indeed, we can define the measure $\mu$ in Proposition \ref{prop. prop3.7inbfn1} by \cite[Proposition 5.10]{BFH2} as a limit of a sequence $\mu_n$; and by \cite[Proposition 5.6]{BFH2} and the two paragraphs below it, we see that $\pi_* \mu$ is the Haar measure on $G/\Gamma$.

This is the only place where we have used the hypothesis that $\Gamma$ is an uniform lattice or $\Gamma = SL(n, \Z)$.
In a recent paper of Brown-Fisher-Hurtado \cite[Prop 8.1]{BFH3}, they have generalised 
Proposition \ref{prop. prop3.7inbfn1} to any lattice in $G$. Admitting their results, all of the results in the present paper hold for arbitrary lattices.

To complete the proof of Proposition \ref{thm small derivates}, it remains to show the following.
\begin{prop}\label{prop. atoginvariance}
Let $\mu$ be an $A$-invariant Borel probability measure on $M^\a$ such that $\pi_*\mu$ is the Haar measure on $G / \Gamma$.  If either that $\rank_\R G  > \dim M$, or that $\rank_\R G \geq \dim M$ and  $\a(\Gamma) \subset \diff^1(M, {\rm vol})$, then $\mu$ is $G$-invariant.
\end{prop}

Let $a\in G$ be a $\R$-semisimple element. 
  The unstable, resp. stable, subgroup for $a$ are respectively 
\aryst
H^u &:=& \{g \mid \lim_{n \to - \infty}a^{n}ga^{-n} = e \}, \\
H^{s} &:=&  \{g \mid \lim_{n \to + \infty}a^{n}ga^{-n} = e \}.
\earyst

\begin{prop} \label{prop. inv}
Let $a \in A$ be an $\R$-semisimple element. Suppose $\mu$ is an $a$-invariant $a$- ergodic probability measure on $M^\a$ such that
\enmt
\item $\pi_*\mu$ is the Haar measure on $G /\Gamma$, and
\item all fiberwise Lyapunovv exponents of $D\tilde\alpha(a)$ are non-positive.
\eenmt
Then $\mu$ is $H^u$-invariant.
\end{prop}

The proof of Proposition \ref{prop. inv} will be given in Section \ref{sec. proof of prop inv}.
We are ready to deduce Proposition \ref{prop. atoginvariance} from Proposition \ref{prop. inv}.
\begin{proof}[Proof of Proposition \ref{prop. atoginvariance}]
 We can assume without loss of generality that $\mu$ is $A$-ergodic, otherwise we may replace $\mu$ by any one of its $A$-ergodic components. This is because any $A$-ergodic component of $\mu$ projects to some $A$-ergodic component of $\pi_*\mu$; while by hypothesis $\pi_*\mu$ is the Haar measure on $G/\Gamma$ which is itself $A$-ergodic by Moore's ergodicity theorem (see for instance \cite{Moo} or \cite[Theorem 2.2.6]{Zim2}).
 This allows us to define fiberwise Lyapunov functionals. 
We denote by $\lambda^F_1, \cdots, \lambda^F_k$ the total collection of distinct fiberwise Lyapunov functionals.
We have that $k \leq \dim M$. Moreover, notice that when $\alpha(\Gamma) \subset \diff^1(M, {\rm vol})$, the sum of  all Lyapunov functionals (considered with multiplicities) is zero. 
Then under the condition of the proposition,  we can pick an arbitrary element $a \in (\cap_{i=1}^k \exp( {\rm Ker}(\lambda^F_i))) \setminus \{ e \}$ such that
\aryst
\lambda^F_{+}(a, \mu) =  \lambda^F_{+}(a^{-1}, \mu) =  0.
\earyst
Then all $a$-ergodic components of $\mu$ have vanishing fiberwise Lyapunov exponents. 
By Proposition \ref{prop. inv}, we deduce that $\mu$ is $H^{u}$-invariant. By symmetry, we also have that $\mu$ is $H^s$-invariant. As $G$ is almost-simple, $G$ is generated by $H^u$ and $H^s$. Consequently, $\mu$ is $G$-invariant.
\end{proof}

\begin{proof}[Proof of Proposition \ref{thm small derivates}]
Assume that $\a$ fails to have uniform subexponential growth of derivatives.
Then by Proposition \ref{prop. prop3.7inbfn1},   there is a $s \in A$ and an $A$-invariant measure $\mu$ such that
$\lambda^F_{+}(s, \mu) > 0$ and  $\pi_*\mu$ is the  Haar measure on $G / \Gamma$.  
By Proposition \ref{prop. atoginvariance}, we deduce that $\mu$ is $G$-invariant.
Recall that $n(G) > \rank_\R G$ where  $n(G)$ denotes the minimal dimension of a non-trivial real representation of the Lie algebra of $G$.
By Zimmer's cocycle superrigidity theorem (we use the version by Fisher-Margulis in \cite[Theorem 1.4]{FM}.  We refer the readers to \cite{Zim, Zim2, ZimmerWitteMorris} for some earlier results), the $G$-action preserves a measurable metric on $E^F$. This contradicts that $\lambda^F_+(s, \mu) > 0$.
Thus $\a$ must has uniform subexponential growth of  derivatives.
\end{proof}

\smallskip

\noindent{\bf Step 2: Strong property $(T)$ and averaging.}

In this step, we follow \cite{BFH1} to construct a $\Gamma$-invariant continuous distance by using the strong property $(T)$ of $\Gamma$ proved by Lafforgue, de Laat and de la Salle in \cite{La, dLdl, dlS}.
The main result of this step is the following proposition whose proof will be given in Section \ref{sec: Subexponential growth and invariant metric}. 
\begin{prop}\label{prop subexp and invariant metric}
If   $\alpha$ has uniform subexponential growth of derivatives, then there exists a distance $\overline{d}: M \times M \to [0, \infty)$ that is invariant by the $\Gamma$-action $\alpha$. Moreover, for any $\beta \in (0,1)$, the set $\alpha(\Gamma)$ is precompact in  ${\rm Hol}$-${\rm Homeo}^{\beta}(M)$, the space of $\beta$-bi-H\"older homeomorphisms of $M$ with respect to the background Riemannian distance.
\end{prop}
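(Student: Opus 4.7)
The plan is to implement the strategy sketched in the introduction: use Lafforgue's strong property $(T)$ to construct a $\Gamma$-invariant $L^p$ section of $S^2(T^*M)$, and then exploit the \emph{exponential} rate of convergence of the averaging scheme to upgrade this to uniform H\"older control on $\alpha(\Gamma)$.

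First I would consider the pullback action $\pi$ of $\Gamma$ on $L^p(S^2(T^*M))$, defined by $\pi(\gamma)\psi = \alpha(\gamma)^{*}\psi$. On symmetric $2$-tensors this pullback carries a factor of $\|D\alpha(\gamma)\|^{2}$ together with a Jacobian factor $|\det D\alpha(\gamma)|^{1/p}$, so uniform subexponential growth of derivatives yields $\|\pi(\gamma)\|_{L^p} \le C_{\varepsilon} e^{\varepsilon d_{\mathrm{word}}(e,\gamma)}$ for every $\varepsilon > 0$. Since $\Gamma$ is an irreducible cocompact lattice in a higher-rank semisimple Lie group without compact factors, by Lafforgue--de Laat--de la Salle it satisfies the strong property $(T)$ for $L^p$-spaces in a nontrivial range of $p$; applied to $\pi$ (whose subexponential growth is absorbed by the tolerance of strong $(T)$), this yields finitely supported probability measures $\mu_n$ on $\Gamma$ and constants $C,\kappa > 0$ such that $\|\pi(\mu_n) - P\|_{L^p \to L^p} \le C e^{-\kappa n}$, with $P$ the projection onto the $\Gamma$-fixed vectors.

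Next, fixing a smooth background Riemannian metric $g_0$ on $M$, I would set $g_n := \pi(\mu_n) g_0$ and $\varphi := P g_0$. Because $g_n$ is a convex combination of pullbacks of $g_0$, it is itself a smooth Riemannian metric whose minimum and maximum eigenvalues degrade at worst subexponentially in $n$ (the measure $\mu_n$ being supported in a word-ball of radius $O(n)$). The section $\varphi \in L^p(S^2(T^*M))$ is $\Gamma$-invariant, and $\|g_n - \varphi\|_p \le C' e^{-\kappa n}$. For any $\gamma \in \Gamma$, invariance of $\varphi$ gives $\|\alpha(\gamma)^{*}g_n - g_n\|_p \le (1 + \|\pi(\gamma)\|_{L^p}) \|g_n - \varphi\|_p \le C_{\varepsilon} e^{\varepsilon|\gamma| - \kappa n}$, which decays exponentially in $|\gamma|$ once one chooses $n = n(\gamma)$ with $\kappa n \gg \varepsilon |\gamma|$.

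The H\"older bound is then extracted as follows. Taking $p$ large (compared to $\dim M$ and to $1/(1-\beta)$), the $L^p$-closeness of the two smooth Riemannian metrics $\alpha(\gamma)^{*} g_{n(\gamma)}$ and $g_{n(\gamma)}$, combined with the subexponential control on their geometry, should force $\alpha(\gamma)$ to be an approximate $g_{n(\gamma)}$-isometry with a quantitative $C^{0,\beta}$ bound independent of $\gamma$. Passing to the limit I expect to obtain a non-degenerate $\Gamma$-invariant length metric $\bar d$ on $M$, realized by the length functional $\sigma \mapsto \int_0^1 \sqrt{\varphi(\dot\sigma,\dot\sigma)}\,dt$ associated to $\varphi$, which is bi-H\"older equivalent to the background Riemannian distance with exponent $\beta$; precompactness of $\alpha(\Gamma)$ in $\mathrm{Hol}\text{-}\mathrm{Homeo}^{\beta}(M)$ then follows from Arzel\`a--Ascoli. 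The main obstacle is exactly this extraction step: converting $L^p$-proximity of two Riemannian metrics into a pointwise bi-H\"older estimate on the intertwining diffeomorphism. The exponential rate from strong $(T)$ is crucial here because it must overpower the subexponential degradation of the geometry of $g_n$, while a judicious choice of $p$ is what delivers the arbitrary H\"older exponent $\beta \in (0,1)$.
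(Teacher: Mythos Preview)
Your initial setup---the $L^p$ action on sections, the invocation of strong property $(T)$, the averaged metrics $g_n$ converging exponentially to a $\Gamma$-invariant $L^p$ section $\varphi$---is correct and matches the paper. The gap is precisely where you say it is: the ``extraction step'' from $L^p$-closeness of $\alpha(\gamma)^* g_n$ and $g_n$ to a uniform $C^{0,\beta}$ bound on $\alpha(\gamma)$. You acknowledge this is the main obstacle and offer no mechanism; as stated, the step is not justified, and it is not clear it can be made to work along the lines you sketch. Two smooth Riemannian metrics can be $L^p$-close while the diffeomorphism intertwining them has no useful H\"older bound, because $L^p$-closeness says nothing about pointwise behavior on sets of small measure. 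Likewise your proposed length functional $\sigma \mapsto \int \sqrt{\varphi(\dot\sigma,\dot\sigma)}$ is ill-defined: $\varphi$ is only an $L^p$ section, so the integrand is not defined along an individual curve.

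The paper's route avoids comparing $\alpha(\gamma)^* g_n$ to $g_n$ altogether and works directly with the limit $\overline g = \varphi$. Define $\overline R(x) = \sup_{\|v\|_g=1}\|v\|_{\overline g}$ and $\underline R(x) = \inf_{\|v\|_g=1}\|v\|_{\overline g}$. Since $\overline g$ is $\Gamma$-invariant, one has the pointwise bound $\|D\alpha(\gamma)(x)\|_g \le \overline R(x)\,\underline R(\alpha(\gamma)x)^{-1}$. The exponential convergence $\|g_n - \overline g\|_{L^p} \le C e^{-\sigma_p n}$, beating the subexponential lower bound on $\underline R_n := \inf_{\|v\|_g=1}\|v\|_{g_n}$, is what shows $\underline R^{-1} \in L^p$ for all $p$ (and $\overline R \in L^p$ is immediate). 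The invariant volume form $\mathrm{vol}_{\overline g}$ then allows one to change variables and conclude $\sup_{\gamma}\int \|D\alpha(\gamma)\|_g^p \, d\mathrm{vol}_g < \infty$ for every $p$. This is a uniform $W^{1,p}$ bound on $\{\alpha(\gamma)\}$ (viewed via an isometric embedding $M \hookrightarrow \mathbb R^N$), and Sobolev embedding with $p > \dim M/(1-\beta)$ gives the uniform $C^{0,\beta}$ bound. Precompactness and the invariant metric then follow by Arzel\`a--Ascoli and averaging over the compact closure in $\mathrm{Homeo}(M)$, not from a length structure associated to $\varphi$.
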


Proposition \ref{prop subexp and invariant metric} replaces \cite[Theorem 2.9]{BFH1}. In \cite{BFH1}, the authors study a $C^2$-action of $\Gamma$, and the induced $\Gamma$ action on $W^{1,p}(S^2(T^*M))$, the Sobolev space of all the sections $\varphi$ of the bundle of symmetric two forms  $S^2(T^*M)$ such that both $\varphi$ and $D\varphi$ are $L^p$ with respect to the Lebesgue measure. Then the strong property $(T)$ and the unifrom subexponential growth of derivatives give us the $\Gamma$-invariant section in  $W^{1,p}(S^2(T^*M))$ which is continuous if $p$ is sufficiently large. The above method can be adapted to the case where the action is $C^{1 + \epsilon}$ for any $\epsilon > 0$.

In our case, $\a$ is only $C^1$, and consequently $\a$ does not induce a $\Gamma$ action on $W^{1,p}(S^2(T^*M))$. We consider instead the induced $\Gamma$-action on  $L^p(S^2(T^*M))$, and obtain a $L^p$ $\a$-invariant section of $S^2(T^*M)$. We use the exponential convergence inherited from the strong  property $(T)$ and Cauchy inequality to bound  the Sobolev norms of the $\Gamma$-action. 

To make use of Proposition \ref{prop subexp and invariant metric}, we also need the solution of Hilbert-Smith conjecture for sufficiently H\"older actions proved in \cite{RS, Mal}. We recall the statement here.
\begin{lemma}\label{lemmahilbertsmithholde}
	For any  $\beta \in (\frac{\dim M}{\dim M +1}, 1)$ the following is true: let $H$ be a compact topological group which admits a faithful action on $M$ by $\beta$-H\"older homeomorphisms. Then $H$ is a Lie group.
\end{lemma}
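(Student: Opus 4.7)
The plan is to argue by contradiction, following the Hilbert--Smith conjecture strategy of Repov\v{s}--\v{S}\v{c}epin \cite{RS}, adapted to H\"older regularity. First, suppose that $H$ is not a Lie group. By the solution of Hilbert's fifth problem, a compact topological group is a Lie group if and only if it has no small subgroups; combined with the structure theory of compact groups (Pontryagin, Montgomery--Zippin), any non-Lie compact topological group contains a closed subgroup $K$ topologically isomorphic to $\Z_p$ for some prime $p$. Since the $H$-action is faithful, so is the restricted $K$-action. It therefore suffices to rule out faithful $\beta$-H\"older actions of $\Z_p$ on $M$, where I write $n := \dim M$.

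Next, I would produce a covering-dimension upper bound for the orbit space. Because $K \subset H$ is compact in $\mathrm{Hol}\text{-}\mathrm{Homeo}^{\beta}(M)$, the action is uniformly $\beta$-H\"older: there is $L > 0$ with $d(gx, gy) \leq L\, d(x,y)^{\beta}$ for every $g \in K$ and every $x,y$ with $d(x,y) \leq 1$, and likewise for $g^{-1}$. The averaged metric $\bar d(x,y) := \sup_{g \in K} d(gx, gy)$ is $K$-invariant and satisfies $d(x,y) \leq \bar d(x,y) \leq L\, d(x,y)^{\beta}$ at small scales; in particular the identity $(M, d) \to (M, \bar d)$ is $\beta$-H\"older, so $\dim_{H}(M,\bar d) \leq n/\beta$. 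The orbit projection $(M, \bar d) \to (M/K, \bar d_{\mathrm{quot}})$ is $1$-Lipschitz and $\dim_{\mathrm{cov}} \leq \dim_{H}$ for separable metric spaces, hence
\[
\dim_{\mathrm{cov}}(M/K) \;\leq\; \frac{n}{\beta} \;<\; n+1
\]
by the hypothesis $\beta > n/(n+1)$.

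Finally, I would obtain the contradiction from Yang's theorem in $p$-adic transformation group theory: any effective action of $\Z_p$ on an $n$-manifold forces the integral \v{C}ech cohomological dimension of the orbit space to equal $n+2$; by Alexandroff's inequality $\dim_{\Z} X \leq \dim_{\mathrm{cov}} X$ on metrizable spaces, this forces $\dim_{\mathrm{cov}}(M/K) \geq n+2$, directly contradicting the bound of the previous paragraph. The step I expect to be the main obstacle is the second one: verifying that the quotient metric $\bar d_{\mathrm{quot}}$ induces the quotient topology on $M/K$ (so the Hausdorff dimension estimate transfers to the topological orbit space), and that the H\"older averaging genuinely yields an invariant metric with the stated bi-H\"older control on small scales. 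These technical points are the heart of the argument and already appear in the Lipschitz treatment of \cite{RS}, but must be carried out with more care in the H\"older regime.
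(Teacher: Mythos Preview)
Your proposal is correct and follows essentially the same route as the paper: reduce to a faithful $\Z_p$-action via the structure theory of compact groups, construct an $H$-invariant metric that is $\beta$-H\"older equivalent to the original one, pass to the orbit space, and contradict Yang's theorem that $\dim_{\Z}(M/\Z_p) = \dim M + 2$.

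Two small points of comparison. First, the paper uses the integral average $\varrho_H(x,y) = \int_H \varrho(hx,hy)\,dh$ rather than your sup $\bar d(x,y) = \sup_{g\in K} d(gx,gy)$; both yield an invariant metric with the required H\"older comparison, so this is a cosmetic difference. Second, and more substantively, the lemma as stated only assumes that each $h\in H$ acts $\beta$-H\"older, not that the H\"older constants are uniformly bounded; your sentence ``Because $K\subset H$ is compact in $\mathrm{Hol}\text{-}\mathrm{Homeo}^{\beta}(M)$'' presupposes exactly this uniformity. The paper closes this gap by invoking the Baire category argument from \cite{RS}: the sets $H_N=\{h:\varrho(hx,hy)\le N\varrho(x,y)^\beta\ \forall x,y\}$ are closed and exhaust $H$, so some $H_N$ has interior, and compactness of $H$ then gives the uniform bound needed for the averaged metric. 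You correctly flag this step as ``the main obstacle''; once it is filled in, your argument and the paper's are the same.
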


\begin{cor}\label{thm small action}
	Let $G, \Gamma	, \mu, \a$ be as in Theorem \ref{thm small action general}.
	Assume either that $\dim M <	\rank_\R(G)$, or that $\dim M \leq \rank_\R(G)$ and $\alpha(\Gamma) \subset \diff^1(M, {\rm vol})$.
	Then $\alpha$ factors through a compact Lie group. That is, there exist: a compact Lie group $H$; an injective group homomorphism $\iota: H \to {\rm Homeo}(M)$; and a  group homomorphism $\phi :  \Gamma \to H$ such that $\a =\iota \circ \phi$. 
\end{cor} 

\begin{proof}
	By Proposition \ref{thm small derivates}, the action  $\alpha$ has uniform subexponential growth of derivatives.
	We fix any $\beta \in (\frac{\dim M}{\dim M+1}, 1)$. By Proposition \ref{prop subexp and invariant metric}, the closure of $\a(\Gamma)$ in   ${\rm Hol}$-${\rm Homeo}^{\beta}(M)$, denoted by $K_0$,  is a  compact topological subgroup of ${\rm Homeo}(M)$.
	By Lemma \ref{lemmahilbertsmithholde}, we see that $K_0$ is a compact Lie group. 
\end{proof}

\smallskip

\noindent{\bf Step 3: Margulis superrigidity with compact codomain.}

After Step 1 and 2, we can apply precisely the same method as in \cite{BFH1} to show the finite image property. We refer the reader to \cite[Section 7]{BFH1} for details.

\begin{proof}[Proof of Theorem \ref{thm small action general}]
The proof is essentially contained in \cite[Section 7]{BFH1}. We reproduce it below for the convenience of the readers.

Let $H$ be the compact Lie group given by Corollary \ref{thm small action}, and let $\iota: H \to {\rm Homeo}(M)$ and $\phi :  \Gamma \to H$ be the associated  group homomorphisms.
Assume that $\alpha = \iota \circ \phi$ has infinite image.
Then by Margulis' arithmeticity theorem and superrigidity theorem, 
each almost simple factor of $H$ is a compact form of $G$. 
Since  $\iota: H \to {\rm Homeo}(M)$ is injective, there is some $x\in M$ such that $\iota(H) x$ contains a compacta homeomorphic to $K/C$ where $K$ is a compact form of $G$ and $C$ is a closed proper subgroup of $K$. This is impossible since by hypothesis $\dim (K/C) \geq d(G) > \dim M$.
\end{proof}

\section{Proof of Proposition \ref{prop. inv}}\label{sec. proof of prop inv}

\subsection{Suspension space} \label{sec: Suspension constuction and its properties}

In this subsection, we recall the suspension construction and the induced $G$-action in \cite[Section 2]{BRHW2}.

Let $\a$ be a $\Gamma$-action on $M$ by $C^1$ diffeomorphisms, i.e., $\a(gh) = \a(g)\a(h)$.
We consider the right acton by $\Gamma$ on $G \times M$ defined as
\aryst
(g,x)\cdot \gamma = (g\gamma, \alpha(\gamma^{-1})(x)), \quad \forall \gamma \in \Gamma
\earyst
and the left $G$-action
\aryst
a \cdot (g,x) =(ag, x), \quad \forall a \in G.
\earyst
Define the quotient manifold $M^{\a} := (G \times M)/ \Gamma$. Since the left $G$-action commutes with the right $\Gamma$-action, the left $G$-action descends to a left $G$-action on $M^{\a}$, denoted by $\ta$.
Since $\a$ is a $C^1$ action, $M^{\a}$ is naturally equipped with a $C^1$ manifold structure. The action $\ta$ is given by $C^1$ diffeomorphisms of $M^{\a}$. Moreover, denote by $\pi : M^{\a} \to G/\Gamma$ the projection induced by $G \times M \to G$, then $M^{\a}$ is a $C^1$ fiber bundle over $G/\Gamma$ induced by $\pi$ with fibers diffeomorphic to $M$. 

With a slight abuse of notation, we use $d(\cdot, \cdot)$ to denote both the right-invariant metric on $G$, and  the quotient metric  on $G/\Gamma$. 
We denote by $\nu$ the normalised left Haar measure on $G/\Gamma$.

By the construction in \cite[Section 2.2]{BFH2} (see also \cite[Section 2.1]{BRHW2} for the details),
there exists  a $C^1$ Riemannian metric $\langle \cdot, \cdot \rangle$ on $G \times M$ with the following properties:
\enmt
\item $\langle \cdot, \cdot \rangle$ is invariant under the right $\Gamma$-action,
\item for each $(g,x) \in G \times M$, under the canonical identification of the $G$-orbit of $(g,x)$ with $G$, the restriction of $\langle \cdot, \cdot \rangle$ to the $G$-orbit of $(g,x)$ coincides with $d_{G}$,
\item There exist a Siegel fundamental set $D \subset G$ for the right $\Gamma$-action (see \cite[VIII.1]{Mar} for the definition) containing the identity $e \in G$, and a constant $C_1 > 1$ such that for any $g_1,g_2 \in D$, the map $(g_1,x) \mapsto (g_2,x)$ distorts the restrictions of $\langle \cdot, \cdot \rangle$ to $\{g_1\} \times M$ and $\{g_2\} \times M$ by at most $C_1$.
\eenmt
We use $\langle \cdot, \cdot \rangle_g$ to denote the restriction of $\langle \cdot, \cdot \rangle$ to $\{g\} \times M$, and view it as a metric on $M$.
By item (1) above, we can equip $M^{\a}$ with the quotient metric of  $\langle \cdot, \cdot \rangle$.

We fix $\{\gamma_i\}$, a finite symmetric generating set for $\Gamma$. Let $\ell$ denote the word-length distance on $\Gamma$ relative to  $\{\gamma_i\}$. 
Given a fundamental domain $F_D \subset D$ for the right $\Gamma$-action on $G$, i.e., $G = F_D\Gamma$ and $F_D\gamma \cap F_D = \emptyset$ for $\forall \gamma \in \Gamma \setminus \{e\}$, the return cocycle $\beta: G \times G/\Gamma \to \Gamma$ associated to $F_D$ is defined as follows. For any $g \in G$,  $x \in G/\Gamma$, we set $\beta(g,x)$ to be the unique element $\gamma \in \Gamma$ such that $g\tilde{x} \in F_D \gamma$, where $\tilde{x}$ is the lift of $x$ in $F_D$. The following are from \cite{BFH2} whose proofs rely on \cite{LMR}.

\begin{lemma}\label{lem fd and precompactness}
If $F_D \subset D$ is a fundamental domain for the right $\Gamma$-action on $G$ such that $e \in F_D$, then 
there is a constant $C > 0$ such that for any $g \in G$, any $x \in G/\Gamma$, $$\ell( \beta(g,x)) < C d(g,e) + Cd(x, \Gamma) + C.$$
\end{lemma}

\begin{lemma}\label{prop integrability}
There is a constant $C > 0$ such that the following is true.
For  any $g \in G$, any $x \in G / \Gamma$, any $p \in \pi^{-1}(x)$ we have
\aryst
\log \norm{D_p\ta(g)} < C d(g,e) + Cd(x, \Gamma) + C.
\earyst
\end{lemma}

\subsection{Smooth cocycle}
Let $a$ be as in Proposition \ref{prop. inv}.
In various statements about typical points in $G /\Gamma$ in this rest of this section, we will always refer to the Haar measure $\nu$.

We first recall several basic definitions from measure theory following \cite[Appendix 1]{CFS}.
 A partition of a Lebesgue space $(Y, \cal{Y}, \mu_Y)$ (for the definition, see \cite[Appendix 1, Definition 4]{CFS}) is a family $\xi = \{ C \}$ of nonempty disjoint measurable subsets $C$ such that $\cup_{C \in \xi} C = M$. A subset $A \in {\cal Y}$ is said measurable with respect to $\xi$ if $A$ is a union of elements of $\xi$.  The partition $\xi$ is said to be measurable if there exists a countable collection of sets $\{ B_i \mid i \in I  \}$ which are measurable with respect to $\xi$ such that for any $C_1, C_2 \in \xi$ we can find an $i \in I$ such that either $C_1 \subset B_i, C_2 \not\subset B_i$ or $C_2 \subset B_i$, $C_1 \not\subset B_i$.  To any measurable partition $\xi$ we can assign a complete $\sigma$-algebra $\cal{B}_\xi \subset \cal{Y}$  consisting of the sets $A \in \cal{Y}$ which coincide modulo $\mu_Y$-null sets with one of the sets which is measurable with respect to $\xi$. In fact such correspondance is bijective (see \cite[Appendix 1, Section 3]{CFS}).

Following \cite{LedStr} and \cite[Sect 9.3]{MT}, we may find a measurable partition $\xi$ of $G / \Gamma$ with the following properties:
\enmt
\item[$(1)$] $\xi$ is subordinate to the partition of $G / \Gamma$ into $H^u$ orbits: for a.e. $x \in G / \Gamma$,
\enmt
\item[$(a)$] the atom $\xi(x)$ is contained in the orbit $H^u\cdot x$,
\item[$(b)$] the atom $\xi(x)$ is precompact in the orbit $H^u\cdot x$,
\item[$(c)$] the atom $\xi(x)$ contains a neighborhood of $x$ in the orbit $H^u\cdot x$,
\eenmt
\item[$(2)$] $\xi$ is $a$-decreasing, i.e., $a(\xi) \leq \xi$.
\eenmt
We also require that $\xi$ satisfies the following additional property:
\enmt
\item[$(3)$] There is a compact set $W \subset H^u$ such that for a.e. $x$
\aryst
\xi(x) \subset W \cdot x.
\earyst 
\eenmt

To build a partition $\xi$ satisfying $(1)$--$(3)$, we first let $\xi_0$ be a partition satisfying $(1)$ and $(2)$. Select a $\xi_0$-measurable subset $S \subset G / \Gamma$ with  positive $\nu$-measure such that the diameter of $\xi_0(x)$ is uniformly bounded in the $H^u \cdot x$-orbit for all $x \in S$. It is well-known that $a$ is ergodic with respect to the Haar measure $\nu$. Thus for a.e. $x \in G / \Gamma$, the following number is well-defined:
\aryst
n_x = \inf \{  n \in \N  \mid  a^n \cdot x \in S \}.
\earyst
We set
\aryst
\xi(x) = a^{-n_x} \xi_0(a^{n_x}\cdot x).
\earyst
Then $\xi$ still satisfies $(1)$ and $(2)$. Since ${\rm Ad}(a^{-1})$ is a contraction restricted to the Lie algebra of $H^u$, $\xi$ also satisfies $(3)$.

Since $\xi$ is measurable, we may apply \cite[Lemma 4.6]{AV} to find a measurable selection: there is a measurable map $\psi: G/\Gamma \to G/\Gamma$
such that $\psi$ is constant on every atom of $\xi$, and $\psi(x) \in \xi(x)$ for $\nu$-a.e. $x$.
Recall our choice of a Siegel fundamental set $D \subset G$ and fix a fundamental domain $F_D \subset D$ such that $e \in F_D$. Let $\bar{\psi} : G / \Gamma \to G$ be the map that assigns $x \in G / \Gamma$  the unique $g \in F_D$ with $\psi(x) = g\Gamma$. Note that $\bar \psi$ is $\xi$-measurable.

Since $H^u$ is horospherical for $a$, for a.e. $x \in G / \Gamma$ the map $H^u \to G /\Gamma$, $h \mapsto h \cdot x$ is injective.  Indeed, for a $\mu$-typical $x \in G/\Gamma$, there is a sequence $\{t_m\}_{m \geq 0}$ of positive numbers that tends to infinity such that $\{ a^{-t_m} \cdot x \}_{m \geq 0}$ is precompact. Then $h \mapsto h \cdot x$ must be injective on $H^u$ since each $H^u$-orbit is contracted by the backward iterates of $a$, and $G \to G/\Gamma$ is a local homeomorphism.
 For any such $x$,  we let $W_x$ be the inverse image of $\xi(x)$ under the map $H^u \to G/\Gamma$, $h \mapsto h  \cdot  \psi(x)$; and let $ \xi_1(x) = W_x \bar\psi(x)$. Notice that by definition $\pi(\xi_1(x)) = \xi(x)$, and $\xi_1(x) \cap F_D \neq \emptyset$.


As $F_D$ is a fundamental domain contained in $D$, we can choose a Borel trivialization associated to $F_D$, denoted by
\aryst
\iota: M^\a &\to& F_D \times M
\earyst
where for each $x \in G / \Gamma$, we identify $\iota|_{\pi^{-1}(x)}$ with a diffeomorphism  $\iota_x: \pi^{-1}(x) \to M$. Moreover, by the construction of the metric $\langle \cdot, \cdot \rangle$ on $D \times M$, we may assume that $\|\iota_x\|_{C^1}$ is uniformly bounded over all $x \in G/\Gamma$. 

Given a typical $x \in G / \Gamma$, let $u_x \in H^u$ be such that $x = u_x \cdot \psi(x)$. Set $g_x : \pi^{-1}(x) \to \pi^{-1}(\psi(x))$ to be
\aryst
g_x(y) = \ta(u_x^{-1})( y).
\earyst

Given $x \in G / \Gamma$, set $F_x : M \to M$ to be
\ary   \label{def F_x}
F_x(y) = \iota_{\psi(a^{-1} \cdot x)}( g_{a^{-1} \cdot \psi(x)}(\ta(a^{-1})( \iota_{\psi(x)}^{-1}(y)))).
\eary

Let $F: G / \Gamma \times M \to G/ \Gamma \times M$ be the measurable map
\ary \label{def F}
F(x,y) = (a^{-1}\cdot x, F_x(y) ).
\eary
Using $\{g_x\}$, we define a measurable map $\Phi : M^\a \to G /\Gamma \times M$  as follows:
\ary \label{def Phi}
\Phi(y)  =  (\pi(y), \iota_{\psi(\pi(y))} g_{\pi(y)}(y)).
\eary
Let $\mu$ be the $a$-ergodic $a$-invariant measure in Proposition \ref{prop. inv}, let $\mu^* = \Phi_* \mu$. 
\begin{claim}\label{claim. boreliso}
 $\Phi$ is a Borel isomorphism. Moreover,
 for $\mu$-a.e. $x \in G/\Gamma$, $\Phi$ is a $C^1$ diffemorphism from $\pi^{-1}(x)$ to $M$,
 and we have
\aryst
F \cdot \Phi  = \Phi \cdot \ta(a^{-1}).
\earyst
\end{claim}
\begin{proof}
We set $x = \pi(z)$. Then we have 
\aryst
\pi(a^{-1}\cdot z) = a^{-1}\cdot \pi(z) = a^{-1} \cdot x.
\earyst
 Then 
\aryst
F \Phi(z) = (a^{-1}\cdot x,  \iota_{\psi(a^{-1}\cdot x)} g_{a^{-1}\cdot \psi(x)}(\ta(a^{-1}) ( g_x(z))))
\earyst
and
\aryst
\Phi( \ta(a^{-1})( z)) =  (a^{-1}\cdot x, \iota_{\psi(a^{-1}\cdot x)} g_{a^{-1}\cdot x}(\ta(a^{-1})( z))).
\earyst
Then by definition, it suffices to show that
\aryst
a u_{a^{-1}\cdot x} = u_x a u_{a^{-1}\cdot \psi(x)}.
\earyst
By definition,
\aryst
a u_{a^{-1}\cdot x}\cdot \psi(a^{-1}\cdot x) = a \cdot a^{-1}\cdot x = x.
\earyst
We also notice that $a^{-1}\cdot \psi(x) \in a^{-1} \cdot \xi(x) \subset \xi(a^{-1}\cdot x)$. Thus 
\aryst
\psi(a^{-1}\cdot \psi(x)) = \psi(a^{-1}\cdot x).
\earyst
Then
\aryst
 u_x a u_{a^{-1}\cdot \psi(x)}\cdot \psi(a^{-1}\cdot x) = u_x a  \cdot a^{-1}\cdot \psi(x) = x.
\earyst
This completes the proof.
\end{proof}

Let $\{ \mu^*_x \}$ be the disintegration of $\mu^*$ with respect to the partition of $G / \Gamma \times M$ into fibers.
The following properties follow  immediately from the above constructions and observations.

\begin{prop}\label{prop property of F}
We have
\enmt
\item for a.e. $x \in G / \Gamma$ and every $x' \in \xi(x)$, $F_x = F_{x'}$; in particular, $x \mapsto F_x$ is $\xi$-measurable.

\item  The function $x \mapsto \log \norm{F_x^{-1}}_{C^1}$ is in $L^1(G / \Gamma, \nu)$.

\item $\Phi$ is a measurable conjugacy between the dynamics of $a^{-1}$ on $M^\a$ and of $F$ on $G/ \Gamma \times M$.

\item The fiberwise Lyapunov exponents for $Da$ with respect to $\mu$ are all non-positive if, and only if, the fiberwise Lyapunov exponents of $F$ with respect to $\mu^*$ are all non-negative.

\item $\mu$ is $H^u$-invariant if and only if the map $x \mapsto \mu^*_x$ is $\xi$-measurable.

\eenmt
\end{prop}
\begin{proof}
Item $(1)$ follows immediately from the construction. Item $(3)$ is given by Claim \ref{claim. boreliso}. 
Item $(4)$ follows from item (3) and our hypothesis on $a$ in Proposition \ref{prop. inv}: all fiberwise Lyapunov exponents of $D\tilde\alpha(a)$ are non-positive.

To show item (2), we first notice that by \eqref{def F_x} for $\nu$-a.e. $x \in G/\Gamma$, we have
\aryst
\| F_{x}^{-1} \|_{C^1} 
&\leq& \| \tilde\alpha(u_{\alpha^{-1}\cdot \psi(x)})|_{\pi^{-1}(\psi(a^{-1} \cdot x))} \|_{C^1}  \| \tilde\alpha(a)|_{\pi^{-1}(a^{-1} \cdot \psi(x))} \|_{C^1}.
\earyst
By Lemma \ref{prop integrability}, we have
\aryst
     \log \| \tilde\alpha(a)|_{\pi^{-1}(a^{-1} \cdot \psi(x))} \|_{C^1}
&\leq& C d(a, e) + C d(a^{-1} \cdot \psi(x), x_0) + C,  \\
    \log\| \tilde\alpha(u_{\alpha^{-1}\cdot \psi(x)})|_{\pi^{-1}(\psi(a^{-1} \cdot x))} \|_{C^1} 
&\leq& C \sup_{b \in W}d(b, e) + C d(  \psi(a^{-1} \cdot x), x_0) + C. 
\earyst
Note that there are $u_1, u_2 \in W$ such that 
\aryst
a^{-1} \cdot \psi(x) = a^{-1} u_1^{-1} x, \quad \psi(a^{-1} \cdot x) =u_2^{-1} a^{-1} x.
\earyst
Then we have
\aryst
d(a^{-1} \cdot \psi(x), x_0),  d(  \psi(a^{-1} \cdot x), x_0) \leq C d(x, x_0) + C'
\earyst
for some $C$ depending only on $G, \Gamma$, and some $C'$ depending only on $W$ and $a$.
We now briefly explain why we have that 
\ary \label{eq quasiboundedfd}
(x \mapsto d(x, x_0)) \in L^1(G/\Gamma, \nu).
\eary
Let  $\rho: G \to SL(N, \R)$ be an embedding given in \cite[Chapter VIII, Section 1]{Mar}.
By \cite[3.5 (*)]{LMR},   we have
\aryst
d(g \Gamma,  ag \Gamma) \leq d(g,  ag) \leq C(1 + \log \| \rho(a) \| ).
\earyst
We remind the reader that the first $d$ denotes the quotient metric on $X$, and the second $d$ denotes the right-invariant metric on $G$.
We deduce \eqref{eq quasiboundedfd} by \cite[Chapter VIII, Section 1, Proposition 1.2]{Mar}.
Then item (2) follows suit.

The \lq\lq only if" part of Item $(5)$  follows by definition. We assume that $x \mapsto \mu^*_x$ is $\xi$-measurable. Then for $\mu$-a.e. $x$, for any $h \in H^u$ such that $h(\pi(x)) \in \xi(\pi(x))$, we have $\ta(h)_*\mu_{\pi(x)} = \mu_{h(\pi(x))}$ where $\{\mu_z\}_{z \in G/\Gamma}$ is the disintegration of $\mu$ along the fibers. Moreover by Claim \ref{claim. boreliso}, we see that $x \mapsto \mu^*_x$ is $a^{n}(\xi)$-measurable for any $n \geq 1$. We can use the above argument for $a^n(\xi)$ instead of $\xi$ (for all $n\geq 1$) to show that $\mu$ is $H^u$-invariant.
\end{proof}

\subsection{Avila-Viana's invariance principle}

We will use a variant of \cite[Theorem B]{AV} to conclude the proof of Proposition \ref{prop. inv}. Let us first briefly recall the setting in \cite{AV}.

Let $(\hat X, \hat{\cB}, \hat \mu )$ be a probability space, and let $\hat f: \hat X \to \hat X$ be an invertible $\hat \mu$-preserving measurable transformation. Let $N$ be a compact Riemannian manifold. We set $\hat \cE = \hat X \times N$, and denote by
 $\hat P: \hat \cE \to \hat X$ the projection to the first coordinate. We say that a $\hat{\cB} \otimes \cB_N$-measurable transformation $\hat F: \hat \cE \to \hat \cE$ is a {\it smooth cocycle} over $\hat f$ if $\hat F$ is of form
 $\hat F(\hat x, \hat y) = (\hat f(\hat x), \hat F_{\hat x}(\hat y))$, where $\hat F_{\hat x}$ is a diffeomorphism of $N$ for each $\hat x$. We also assume the following:
 \ary \label{eq integrablederivatives}
 \int  |\log ( \sup_{\hat y}\| D \hat F_{\hat x}(\hat y)^{-1} \|)| d\hat\mu(\hat x) < \infty.
 \eary
 In the following,  
 for any integer $k$, for any $\hat x \in {\hat X}$ we define
    \aryst
    \hat  F^{k}_{\hat x} = \begin{cases}
    {\hat F}_{{\hat f}^{k-1}({\hat x})} \cdots {\hat F}_{\hat x} \quad & k \geq 0, \\
    ({\hat F}_{{\hat f}^{-k}({\hat x})})^{-1} \cdots ({\hat F}_{{\hat f}^{-1}({\hat x})})^{-1} \quad & k < 0.
    \end{cases}
    \earyst
 We warn the readers not to confuse the above notation with $(\hat F_{\hat x})^k$.

 In this case, for any $\hat F$-invariant probability measure $\hat m$ on $\hat \cE$ that projects to $\hat \mu$ under $\hat P$, the minimal Lyapunov exponent is a well-defined quantity at $\hat m$-almost every $(\hat x, \hat y)$ by the following formula:
 \aryst
 \lambda_-(\hat F, \hat x, \hat y) = \lim_{n \to \infty} \frac{1}{n} \log \| D\hat F_{\hat x}^n(\hat y)^{-1} \|^{-1}.
 \earyst

The following theorem, whose proof is deferred to Appendix \ref{aapendix}, is a variant of \cite[Theorem B]{AV}.
\begin{thm}\label{thm invppl}
	Let $\hat m$ be an $\hat F$-invariant measure on $\hat \cE$ which projects down to $\hat \mu$.
	Let $\cB_0 \subset \hat \cB$ be a $\sigma$-algebra which generates $\hat \cB \mod 0$ under $\hat f$. Assume that both $\hat f$ and $\hat x \mapsto \hat F_{\hat x}$ are $\cB_0$-measurable $\mod 0$, and $\lambda_-(\hat F, \hat x, \hat y) \geq 0$ for $\hat m$-almost every $(\hat x, \hat y)$, then the disintegration $\hat x \mapsto \hat m_{\hat x}$ of the measure $\hat m$ is $\cB_0$-measurable $\mod 0$.
\end{thm}

\subsection{Completing the proof}
We can now finish the proof of Proposition \ref{prop. inv}.

\begin{proof}[Proof of Proposition \ref{prop. inv}]

By Proposition \ref{prop property of F}, the hypothesis of Theorem \ref{thm invppl} is satisfied with $(\hat X, N, \cB_0,  \hat \mu, \hat m,  \hat f, \hat F)$ being $(G/\Gamma, M, \cB_{\xi}, \mu, \nu, a^{-1}, F)$. Here $\cB_\xi$ denotes the complete $\sigma$-algebra generated by the partition $\xi$.
Then by Theorem \ref{thm invppl}, the map $x \mapsto \mu^*_x$ is $\xi$-measurable. Proposition \ref{prop. inv} then follows from Proposition \ref{prop property of F}(5).
\end{proof}

\section{Proof of Proposition \ref{prop subexp and invariant metric}}\label{sec: Subexponential growth and invariant metric}

Recall that we fixed a finite set of symmetric generators $\{\gamma_i\}$ for $\Gamma$. The word distance $\ell$ on $\Gamma$ is defined in Section \ref{sec: Suspension constuction and its properties}.

\begin{proof}[Proof of Proposition \ref{prop subexp and invariant metric}]
We let $\norm{\cdot}_g$ denote the background Riemannian metric $g$ on $TM$, and let $\vol_g$ denote the volume form induced by $\norm{\cdot}_g$. There is a $C^\infty$ Riemannian metric on $S^{2}(T^*M)$ associated to $\norm{\cdot}_g$. 
We denote by $L^{p}(M, \vol_g, S^{2}(T^*M))$ the space of $L^{p}$ sections of the tensor bundle $S^{2}(T^*M)$ with respect to $\vol_g$.

Since $\alpha$ has  uniform subexponential growth  of derivatives, by the strong property (T) of the lattice $\Gamma$ (proved in \cite{La, dLdl, dlS}), we can adapt the argument in \cite{BFH1} to show that there exist:\footnote{We obtain $(1)$--$(3)$ for $p \in (1,\infty)$ by strong property $(T)$, then the case for $p=1$ follows from Cauchy's inequality.}
\enmt
\item constants $C'''_p, \sigma_p > 0$ for every $1 \leq p < \infty$;
\item $\overline{g} \in L^{p}(M, \vol_g, S^{2}(T^*M))$ for all $1 \leq p < \infty$, which is non-degenerate, i.e., $\norm{v}_{\overline{g}} > 0$ for $\vol_g$-a.e. $x \in M$, and every non-zero $v \in T_xM$;
\item a sequence of probability measures on $\Gamma$, denoted by $\{\omega_n\}_n$, satisfying $supp(\omega_n) \subset B_{word}(e, n) \subset \Gamma$ for every $n$, where  $B_{word}(e, n)$ denotes the radius $n$ open ball in $\Gamma$ centered at $e$ with respect to the word distance,
\eenmt
 such that, setting $g_n = \int \alpha(\gamma)^{*}g d\omega_n(\gamma)$,
then we have
\ary \label{item g n overline g}
\norm{g_n - \overline{g}}_{L^{p}} < C'''_p e^{-n\sigma_p},\quad \forall 1 \leq p < \infty.
\eary
As a consequence, denote by $\vol_{\overline{g}}$ the measurable volume form induced by $\norm{\cdot}_{\overline{g}}$, then the measure $d\vol_{\overline{g}}$ is absolutely continuous with respect to $d\vol_g$, and the density function $\frac{d\vol_{\overline{g}}}{d\vol_g}$ has full support.

We define Lebesgue measurable functions $\overline{R}, \underline{R} : M \to \R_+$ as follows. Set
\aryst
\overline{R}(x) = \sup_{v \in T_xM, \norm{v}_g = 1} \norm{v}_{\overline{g}}, \quad \underline{R}(x) = \inf_{v \in T_xM, \norm{v}_g = 1} \norm{v}_{\overline{g}}.
\earyst
It is direct to see that for $d\vol_g$-a.e. $x \in M$,
\aryst
\frac{d\vol_{\overline{g}}}{d\vol_g}(x) < \overline{R}^{\dim M}(x), \quad \frac{d\vol_g}{d\vol_{\overline{g}}}(x) < \underline{R}^{-\dim M}(x).
\earyst
We have the following lemma.
\begin{lemma} \label{lem underline R overline R integrable}
For every $1 \leq p < \infty$, there is $C_p > 0$ such that  
\aryst
\int \underline{R}^{-p} d\vol_g < C_p,  \quad \int \overline{R}^p d\vol_g < C_p.
\earyst
\end{lemma}
\begin{proof}
The second inequality follows immediately from the fact that $\overline{g} \in L^{p}(M, \vol_g, S^{2}(T^*M))$. It remains to prove the first inequality.

We define for every $n \geq 1$,
\aryst
&& \underline{R}_n(x) = \inf_{v \in T_xM, \norm{v}_{g} = 1}\norm{v}_{g_n}, \quad \forall x \in M, \\
\mbox{and }&&\Omega_n = \{x \mid \underline{R}(x) \geq \frac{1}{2}\underline{R}_n(x)\}.
\earyst
For the convenience of the notation, we set $\Omega_0 = \emptyset$. 
It is clear that $\cup_n \Omega_n$ is a $d\vol_g$-conull subset of $M$.

By the uniform subexponential  growth of  derivatives, for every $\varepsilon > 0$ there is $C''_{\varepsilon} > 0$ such that
\ary \label{item upper bound underline R n}
\sup_{x \in M}  (\underline{R}_{n}(x)^{-1}) < C''_{\varepsilon}e^{n\varepsilon}, \quad \forall n \geq 1.
\eary
By \eqref{item g n overline g} and \eqref{item upper bound underline R n}, for every $\varepsilon > 0$  we have
\aryst
\vol_g(\Omega_n^{c}) &\leq& \vol_g( \{ x \mid |\underline{R}(x) - \underline{R}_n(x)| > \frac{1}{2} \underline{R}_n(x) \} ) \\
&\leq& 2\sup_{x \in M}  (\underline{R}_{n}(x)^{-1})\int |\underline{R}(x) - \underline{R}_n(x)| d\vol_g(x) \\
&\leq& 2C''_{\varepsilon}C'''_1 e^{n\varepsilon-n\sigma_1}.
\earyst
Then for each $1 \leq p < \infty$, we take $\varepsilon = \sigma_1/(10p)$, and we obtain
\aryst
\int \underline{R}(x)^{-p} d\vol_g(x) 
&\leq& 2^{p} \sum_{n= 0}^{\infty} \int_{\Omega_{n+1} \setminus \Omega_n} \underline{R}_{n+1}(x)^{-p} d\vol_g(x) \\
&\leq& 2^{p} \sum_{n= 0}^{\infty} \sup_x( \underline{R}_{n+1}(x)^{-p}) \vol_g(\Omega_n^{c})\\
&\leq&2^{p+1} (C''_{\varepsilon})^{p+1}C'''_1 \sum_{n=0}^{\infty} e^{(n+1)p\varepsilon - n(\sigma_1 - \varepsilon)} := C_p < \infty.
\earyst
\end{proof}

\begin{lemma}
For every $1 \leq p < \infty$, there exists $D_p > 0$ such that for every $\gamma \in \Gamma$,
\aryst
\int_M \norm{D_x\a(\gamma)}_{g}^{p} d\vol_g(x) \leq  D_p.
\earyst
\end{lemma}
\begin{proof}
Take an arbitrary $\gamma \in \Gamma$, and set $F = \a(\gamma)$. 
We recall that $F$ preserves  $\overline{g}$. That is, for $d\vol_g$-a.e. $x$, for every $v \in T_xM$, we have $\norm{v}_{\overline{g}} = \norm{D_xF(v)}_{\overline{g}}$. Hence the measure $d\vol_{\overline{g}}$ is $F$-invariant.

Notice that for $d\vol_g$-a.e. $x \in M$,\aryst
&&\norm{D_xF}_{g} = \sup_{v \in T_xM, \norm{v}_g = 1}\norm{D_xF(v)}_{g} \\
&=& \sup_{v \in T_xM, \norm{v}_g = 1}\norm{D_xF(v)}_{\overline{g}}\frac{\norm{D_xF( v)}_{g} }{\norm{D_xF(v)}_{\overline{g}} }  \\
&=& \sup_{v \in T_xM, \norm{v}_g = 1}\norm{ v}_{\overline{g}}\frac{\norm{D_xF( v)}_{g} }{\norm{D_xF(v)}_{\overline{g}} } \\
&\leq& \overline{R}(x) \underline{R}(F(x))^{-1}.
\earyst
Then by Cauchy's inequality,
\aryst
\int \norm{D_xF}_g^{p} d\vol_g(x) \leq \left(\int \overline{R}(x)^{2p} d\vol_g(x) \right)^{1/2} \left(\int \underline{R}(F(x))^{-2p} d\vol_g(x) \right)^{1/2}.
\earyst
Also
\aryst
\int \underline{R}(F(x))^{-2p} d\vol_g(x) &=& \int \underline{R}(F(x))^{-2p}  \frac{d\vol_g}{d\vol_{\overline{g}}}(x) d\vol_{\overline{g}}(x)  \\
&\leq& \left( \int \underline{R}(F(x))^{-4p} d\vol_{\overline{g}}(x)  \right)^{1/2} \left( \int (\frac{d\vol_g}{d\vol_{\overline{g}}}(x) )^2 d\vol_{\overline{g}}(x)  \right)^{1/2}  \\
&\leq&\left( \int \underline{R}(F(x))^{-4p} d\vol_{\overline{g}}(x)  \right)^{1/2} \left( \int \frac{d\vol_g}{d\vol_{\overline{g}}}(x) d\vol_{g}(x)  \right)^{1/2} \\
&\leq&\left( \int \underline{R}(x)^{-4p} d\vol_{\overline{g}}(x)  \right)^{1/2} \left( \int  \underline{R}^{-\dim M}(x) d\vol_{g}(x)  \right)^{1/2}
\earyst
and
\aryst
 \int \underline{R}(x)^{-4p} d\vol_{\overline{g}}(x)  &=&  \int \underline{R}(x)^{-4p} \frac{d\vol_{\overline{g}}}{d\vol_g}(x)  d\vol_{g}(x)  \\
 &\leq& \left( \int \underline{R}(x)^{-8p} d\vol_{g}(x)  \right)^{1/2} \left( \int (\frac{d\vol_{\overline{g}}}{d\vol_g}(x) )^2 d\vol_{g}(x)  \right)^{1/2} \\
  &\leq& \left( \int \underline{R}(x)^{-8p} d\vol_{g}(x)  \right)^{1/2} \left( \int \overline{R}^{2\dim M}(x)  d\vol_{g}(x)  \right)^{1/2}.
\earyst
By Lemma \ref{lem underline R overline R integrable}, 
\aryst
\int \norm{D_xF(v)}_g^{p} d\vol_g(x) \leq C_{2p}^{1/2} C_{8p}^{1/8} C_{2\dim M}^{1/8} C_{\dim M}^{1/4}.
\earyst
Since $\gamma$ is chosen arbitrarily, we can conclude the proof by taking $D_p$ to be the right hand side of the last  inequality.
\end{proof}
We fix an embedding $\iota: M  \rightarrow \R^{N}$ for some integer $N$. Let $\pi_i: \R^N \to \R$ be the projection to the $i$-th coordinate. We have seen that for every $1 \leq p < \infty$, there exists a constant $C'_p > 0$ such that for every $1 \leq i \leq N$, for every $\gamma \in \Gamma$, 
\aryst
\int |D_x(\pi_i \iota \a(\gamma))|^{p} d\vol_g(x) < C'_p.
\earyst
Take $p > \dim M/(1-\beta)$. Then by Sobolev's embedding theorem, we can see that the set $\{\alpha(\gamma) \mid \gamma \in \Gamma \}$ is pre-compact in ${\rm Hol}$-${\rm Homeo}^{\beta}(M)$. We know that any pre-compact subset of ${\rm Hol}$-${\rm Homeo}^{\beta}(M)$ is  equicontinuous in ${\rm Homeo}(M)$. 
Thus the closure of $\a(\Gamma)$ in ${\rm Homeo}(M)$ is a compact topological group $K_0$, and it is direct to verify  by definition that $K_0 \subset  {\rm Hol}$-${\rm Homeo}^{\beta}(M)$. It is then direct to construct a $\Gamma$-invariant continuous distance on $M$ by averaging.
\end{proof}

\appendix
\section{} \label{aapendix}
We now give the proof of Theorem \ref{thm invppl} in this appendix.
We recall the construction in \cite[Section 3]{AV}.
There is a Lebesgue space $(X, \cB, \mu)$ obtained by identifying any two points of $\hat X$ which are not distinguished by any element of $\cB$; and a projection $\pi : \hat X \to X$ such that $\cB = \pi_* \cB_0$ and $\mu = \pi_* \hat \mu$. Since $\hat f$ is $\cB_0$-measurable $\mod 0$, there exists a $\cB$-measurable $\mod 0$ transformation $f: X \to X$ such that $\pi \circ \hat f = f \circ \pi$. Let $\cE = X \times N$ and $P : \cE \to X$ the canonical projection. Since $\hat F$ is $\cB_0$-measurable $\mod 0$, we may write $F_{\pi(\hat x)} = \hat F_{\hat x}$ for some $\cB$-measurable $\mod 0$ fiber bundle morphism $F: \cE \to \cE$ over $f$. The measure $m = (\pi \times id)_* \hat m$ is $F$-invariant and projects down to $\mu$. Denote by $\{ m_x \}_{x \in X}$  the measure disintegration of $m$  corresponding to the partition of $\cE$ into the fibers. By the $F$-invariance of $m$ we deduce that for $\mu$-a.e. $x \in X$,
    \ary \label{eq mfxm}
    m_{f(x)} = \int (F_{x'})_* m_{x'} d\mu^{f^{-1}(\cB_0)}_x(x'). 
    \eary 

For any integer $l  \geq 0$, we define $J_l : \cE \to [0, \infty)$ by considering the Lebesgue decomposition of $(F_x^{-l})_* m_{f^l(x)}$ relative to $m_x$ :
\aryst
(F_x^{-l})_* m_{f^l(x)} = J_l(x, \cdot) m_x + \eta^{(l)}_x.
\earyst
We abbreviate $J_1$ as $J$.

Define
\aryst
h(F, m) = \int -\log J dm.
\earyst
Following the proof of  \cite[Theorem B]{AV},
we will show that $m(\{ J = 0\}) = 0$
and in addition the following is true.
\begin{prop}\label{prop ineq_entropyandle}
	We have
	\aryst
	0 \leq  h(F, m) \leq - \dim N \int \min\{ 0, \lambda_-(\hat F) \}  d\hat m.
	\earyst
\end{prop}

The statement of Proposition \ref{prop ineq_entropyandle} is the same as \cite[Proposition 3.1]{AV}, except that we are now assuming \eqref{eq integrablederivatives} while in \cite{AV} the authors assume that $\log \| D \hat F_{\hat x}(\hat y)^{-1} \|$,  $\log \| D \hat H_{\hat x}(\hat y) \|$ and $\log \| D \hat H_{\hat x}(\hat y)^{-1} \|$ are all uniformly bounded, and the dependence of $D\hat F_{\hat x}(\hat y)$, $D\hat H_{\hat x}(\hat y)$ on $\hat x, \hat y$ are uniformly continuous. Thus we will need to make some adjustments to the proof in \cite{AV} (see also \cite{Led2}).

Under the hypothesis of Theorem \ref{thm invppl}, we can conclude by Proposition \ref{prop ineq_entropyandle} that $h(F, m)$ vanishes.
Once we know that $h( F,  m)$ vanishes, we can apply \cite[Proposition 3.2]{AV} to conclude the proof of Theorem \ref{thm invppl}. We recall the statement below.
\begin{prop}\label{prop zeroentropy_deterministic}
If $h(F,  m) = 0$ then $\hat x \mapsto \hat m_{\hat x}$ is $\cB_0$-measurable $\mod 0$.
\end{prop}
The proof of Proposition \ref{prop zeroentropy_deterministic} is rather general and the condition \eqref{eq integrablederivatives} suffices. \clb Now it suffices to give the proof of 
Proposition \ref{prop ineq_entropyandle}. The proof here follows essentially the scheme in \cite{Led2}.
\begin{proof}[Proof of Proposition \ref{prop ineq_entropyandle}]
	By the same argument in \cite[Section 3.2]{AV}, we may assume without loss of generality that $\hat m$ is ergodic for $\hat F$. In this case, $\min(0, \lambda_-(\hat F))$ is a constant $\hat m$-almost everywhere, and is denoted by $-\lambda \leq 0$.


    For any integer $k$, for any $(x, \xi) \in X \times N$ we define
    \aryst
    F^{k}_x = \begin{cases}
    F_{f^{k-1}(x)} \cdots F_x \quad & k \geq 0, \\
    F_{f^{-k}(x)}^{-1} \cdots F_{f^{-1}(x)}^{-1} \quad & k < 0,
    \end{cases}
    \earyst
    and
    \aryst
    L_k(x, \xi) = \| D_\xi F^{-k}_{f^k(x)} \|, \quad
     C_k(x) = \sup_{\xi \in N} L_k(x, \xi),  \quad \tilde C_k(x, \xi) = C_k(x). 
    \earyst
    Notice that we have
    \ary \label{eq ckc1}
    0 \leq \log \tilde C_k(x, \xi) \leq \sum_{i=0}^{k-1} \log \tilde C_1(F^{i}(x,\xi)).
    \eary
    
    Given $(x, \xi) \in X \times N$, we denote by $B(\xi, \delta)$ the ball in $N$ centered at $\xi$ of radius $\delta > 0$ and write
    \aryst
    B((x, \xi), \delta) = \{ x \} \times B(\xi, \delta).
    \earyst
    For each integer $l \geq 0$, we write 
    \aryst
    J_l(x, \xi; \delta) = \frac{(F_{f^l(x)}^{-l})_* m_{f^l(x)}(B(\xi, \delta))}{m_x(B(\xi, \delta))}
    \earyst
    and
    \aryst
    J^*_l(x, \xi) = \max_{\delta > 0} J_l(x, \xi; \delta).
    \earyst
    It is clear that $J_l \geq 0$ and $J^*_l \geq 1$.  
    
    We fix some $\epsilon > 0$. Then there is $\beta_1 = \beta_1(\epsilon) > 0$ so that for any set $A \subset X \times N$ with $m(A) < \beta_1$, we have
    \ary  \label{eq intc1ona}
    \int_A \log \tilde C_1 d\mu < \epsilon.
    \eary
   Fix some integer $l > 0$ such that the measurable set $\Lambda_1 \subset X \times N$ defined by
    \aryst
    \Lambda_1 = \{ (x, \xi)  \mid L_l(x, \xi) \leq e^{(\lambda + \epsilon)l}  \}
    \earyst
    satisfies $m(\Lambda_1) > 1 - \beta_1/2$. 
    Then there is a subset $\Lambda \subset \Lambda_1$ with 
    \ary \label{eq mlambda}
    m(\Lambda) > 1 - \beta_1
    \eary
    such that the derivatives $D_{\xi}F_x$ are uniformly continuous in $\xi$ over all $x \in \Lambda$, and for some $\delta_1 = \delta_1(\epsilon, l, \Lambda) > 0$, for any $x \in \Lambda$, for any $\delta \in (0, \delta_1(\epsilon))$ we have
    \ary \label{eq flmapsinclusion}
F_{x}^{-l}(B(\xi, \delta)) \subset B(F_{x}^{-l}(\xi), e^{(\lambda + 2\epsilon)l}\delta).
    \eary
    We denote by $E_l$ the collection of ergodic component of $m$ for $F^l$. Since $\mu$ is $F$-ergodic, we deduce that $E_l$ is finite and $F$ induces a cyclic permutation of $E_l$. Moreover, for $m$-almost every $(x,\xi)$ we denote by $m_{(x,\xi)}$ the ergodic component at $(x,\xi)$.
    
    By \cite[Prop 5]{Led2}, we know that 
    \aryst
    \log J^*_l \in L^1(\cE, m).
    \earyst
    More precisely, we have the following.
    \begin{lemma}\label{lem jl} 
    For $m$-a.e. $(x, \xi)$, we have
    \aryst
    J_l(x, \xi) = \prod_{i=0}^{l-1} J(F^i(x, \xi)).
    \earyst
    Consequently, for any $m' \in E_l$, we have
    \ary  \label{eq lhfm}
    l h(F, m) = - \int \log J_l(x, \xi) dm'(x, \xi).
    \eary 
    \end{lemma}
    \begin{proof}
    It is clear that the first equality holds when $l=1$. For any $l > 1$, we have
    \aryst
    (F^{-l}_{f^l(x)})_* m_{f^l(x)} &=& (F^{-1}_x)_*(F^{-l+1}_{f^l(x)})_* m_{f^l(x)} \\
    &=& (F^{-1}_x)_*(J_{l-1}(F(x, \xi))m_{f(x)} + \eta^{(l-1)}_{f(x)}) \\
    &=& J_{l-1}(F(x, \xi)) \cdot J(x,\xi) m_x + J_{l-1}(F(x, \xi)) \eta_x + (F^{-1}_x)_*\eta^{(l-1)}_{f(x)}.
    \earyst
    Here $\eta^{(l-1)}_{f(x)}$ is the singular component of $(F^{-l+1}_{f^l(x)})_* m_{f^l(x)}$ with respect to $m_{f(x)}$.
    By definition, $\eta_x$ is singular with respect to $m_x$. Moreover, $ (F^{-1}_x)_*\eta^{(l-1)}_{f(x)}$ is also singular with respect to $m_{x}$ for $m$-a.e $x$.
    Otherwise, we would know that $\eta^{(l-1)}_{f(x)}$ is not singular with respect to $(F_x)_*m_{x}$; then by \eqref{eq mfxm} we would know that $\eta^{(l-1)}_{f(x)}$ is not singular with respect to $m_{f(x)}$. A contradiction.
    Consequently, we see that
    \aryst
    J_l = J_{l-1} \circ F \cdot J.
    \earyst
    We then conclude the proof of the first equality by induction.
    The equality \eqref{eq lhfm} in the lemma is an immediate consequence of the first equality, and the fact that $m = \frac{1}{l}\sum_{i=0}^{l-1} (F^i)_*m'$ for any $m' \in E_l$.
     \end{proof}
    We choose some $\beta_2 = \beta_2(\epsilon, l)  > 0$ such that for  $m' \in E_l$, and for every $A \subset X \times N$ with $m'(A) < \beta_2$, we have
    \ary \label{eq inajstar}
    \int_A (\log J^*_l + \log J_l) dm' < \epsilon.
    \eary    
   
   We define
   \aryst
   Z = \{  (x, \xi) \mid J_l(x, \xi) = 0 \}, \quad G = Z^c =  \{ (x, \xi) \mid J_l(x,\xi) > 0 \}.
   \earyst
   We fix a large constant $D > 0$.
   Given a constant $\delta  > 0$, we define
   \aryst
   G(\delta) &=& \{ (x, \xi) \in G \mid \log J_l(x, \xi; \delta') \leq \log J_l(x, \xi) + \epsilon \quad \forall \delta' \in (0, \delta) \}, \\
   Z(\delta) &=& \{ (x, \xi) \mid \log J_l(x, \xi; \delta') \leq -D \quad \forall \delta' \in (0, \delta) \}.
   \earyst
   We fix some $\delta_2 = \delta_2(\epsilon, l) > 0$ sufficiently small so that for every $m' \in E_l$,
   \ary \label{eq mgdelta2}
   m'(G \setminus G(\delta_2)) < \beta_2.
   \eary
   
   We take an arbitrary $\delta_0 \in (0, \min(\delta_1, \delta_2))$.
   Given $(x, \xi) \in X \times N$, define $\delta_l(x, \xi; 0) = \delta_0$ and for $k \geq 1$ recursively define
   \aryst
   \delta_l(x, \xi; k) = \begin{cases} 
   e^{(-\lambda - 2\epsilon)l} \delta_l(x, \xi; k-1) \quad & F^{kl}(x, \xi) \in \Lambda, \\
   [\tilde C_l(F^{kl}(x, \xi))]^{-1} \delta_l(x, \xi; k-1) \quad & F^{kl}(x, \xi) \notin \Lambda.
   \end{cases}
   \earyst
   Observe that we have
   \aryst
   \delta_l(x, \xi; k+1) \leq \delta_l(x, \xi; k) \leq \delta_0 \quad \forall k \geq 0
   \earyst
   and by \eqref{eq flmapsinclusion} and the definition of $\tilde C$ we deduce that
   \aryst
   F^{-l}(B(F^{(k+1)l}(x, \xi), \delta_l(x, \xi; k+1))) \subset B(F^{kl}(x, \xi), \delta_l(x, \xi; k)) \quad k \geq 0.
   \earyst
   \begin{lemma}
   For $m$-a.e. $(x, \xi)$, we have
   \aryst
   \liminf_{n \to \infty} \frac{1}{n} \log \delta_l(x, \xi ; n) \geq (- \lambda - 3 \epsilon) l.
   \earyst
   \end{lemma}
   \begin{proof}
   By definition, we have
   \aryst
   \log \delta_l(x, \xi ; n) = \log \delta_0 + \sum_{k=0}^{n-1} (l(-\lambda - 2\epsilon)1_{F^{kl}(x,\xi) \in \Lambda} - \log \tilde C_l(F^{kl}(x,\xi))1_{F^{kl}(x,\xi) \notin \Lambda}).
   \earyst
   Then by Pointwise Ergodic Theorem, we have
   \aryst
   \liminf_{n \to \infty} \frac{1}{n}   \log \delta_l(x, \xi ; n) &\geq&  - (\lambda + 2\epsilon)l m_{(x,\xi)}(\Lambda) - \int_{\Lambda^c} \log \tilde C_l dm_{(x, \xi)}.
   \earyst
   By \eqref{eq ckc1}, \eqref{eq intc1ona} and \eqref{eq mlambda}, we obtain
   \aryst
   \int_{\Lambda^c} \log \tilde C_l dm_{(x, \xi)} &\leq&
   \sum_{i=0}^{l-1} \int_{\Lambda^c} \log \tilde C_1 \circ F^i dm_{(x, \xi)} \\
   &\leq&  \sum_{i=0}^{l-1} \int_{\Lambda^c} \log \tilde C_1   d(F^i)_*m_{(x, \xi)} \\
   &=& l \int_{\Lambda^c} \log \tilde C_1   dm \leq l \epsilon.
   \earyst
   The equality follows from $m = \frac{1}{l} \sum_{i=0}^{l-1}(F^i)_*m_{(x, \xi)}$ for $m$-a.e. $(x, \xi)$.
   Then
   \aryst
    \liminf_{n \to \infty} \frac{1}{n} \log \delta_l(x, \xi ; n)  \geq  - (\lambda + 3\epsilon) l.
   \earyst
   \end{proof}
   Recall that by \cite[Proposition 5]{Led2}, for any Borel probability measure $\nu$ on $N$, we have
   \aryst
   \limsup_{r \to 0} \frac{\log \nu(B(\xi, r))}{\log r} \leq \dim N, \quad \nu-a.e. \ \xi.
   \earyst
   Thus we may pick a subset $\Omega_1 \subset X \times N$ such that 
\aryst
\limsup_{r \to 0} \frac{ \log \inf_{(x, \xi) \in \Omega_1} m_{x}(B(\xi, r))}{\log r} \leq \dim M  
\earyst
and $m'(\Omega_1) > 0$ for every $m' \in E_l$.
Then for $m$-almost every $(x, \xi)$, there is an infinite sequence of $n$ such that $F^{nl}(x, \xi) \in \Omega_1$. Then for all sufficiently large $n$ in such sequence we have
\ary
\frac{1}{n}\log m_{f^{nl}(x)}(B(F^{nl}(x, \xi), \delta_l(x, \xi; n))) 
&\geq& \frac{1}{n} \log \delta_l(x, \xi; n) (\dim N + \epsilon)  \nonumber \\
&\geq& ( - \lambda - 4\epsilon)(\dim N + \epsilon)l. \label{eq lowerboundforlogm}
\eary
On the other hand, we have
\aryst
&& m_{f^{nl}(x)}(B(F^{nl}(x, \xi), \delta_l(x, \xi; n))) \\
&=& m_x(B(x, \delta_l(x, \xi; 0))) \\
&& \cdot \prod_{j=0}^{n-1} \frac{m_{f^{(j+1)l}(x)}(B(F^{(j+1)l}(x, \xi), \delta_l(x, \xi; j+1) ))}{m_{f^{jl}(x)}(B(F^{jl}(x, \xi), \delta_l(x, \xi; j)) )}  \\
&\leq&  \prod_{j=0}^{n-1} \frac{m_{f^{(j+1)l}(x)}(B(F^{(j+1)l}(x, \xi), \delta_l(x, \xi; j+1) ))}{m_{f^{jl}(x)}(B(F^{jl}(x, \xi), \delta_l(x, \xi; j)))} \\
&\leq&  \prod_{j=0}^{n-1} \frac{m_{f^{(j+1)l}(x)}(F^l(B(F^{jl}(x, \xi), \delta_l(x, \xi; j) )))}{m_{f^{jl}(x)}(B(F^{jl}(x, \xi), \delta_l(x, \xi; j)))} \\
&=& \prod_{j=0}^{n-1} J_l(F^{jl}(x, \xi), \delta_l(x, \xi; j)).
\earyst
Take an arbitrary $m'  \in E_l$. Then for $m'$-almost every $(x, \xi)$ we have
\aryst
&& \limsup \frac{1}{n} \log m_{f^{nl}(x)}(B(F^{nl}(x, \xi), \delta_l(x, \xi; n))) \\
&\leq& \limsup \frac{1}{n} \sum_{j=0}^{n-1} \log J_{l}(F^{jl}(x, \xi), \delta_l(x,\xi; j)) \\
&\leq& \int_{Z(\delta_0)} \log J_l  dm' + \int \log J^*_l dm' \\
&\leq&   - D m'(Z(\delta_0)) + l\int \log J^* dm 
\earyst
The last inequality follows from Lemma \ref{lem jl} and the definition of $Z(\delta_0)$.
Combine the above inequality with \eqref{eq lowerboundforlogm} we conclude that
\aryst
 m'(Z(\delta_0)) \leq \frac{l(\int \log J^* dm + (\lambda + 4\epsilon)(\dim N + \epsilon))}{D}.
\earyst
Since the above holds for any $\delta_0$ sufficiently small and for any  $m' \in E_l$, we deduce that 
\aryst
m(Z) \leq \limsup_{\delta_0 \to 0} m(Z(\delta_0)) \leq \frac{l(\int \log J^* dm + (\lambda + 4\epsilon)(\dim N + \epsilon))}{D}.
\earyst
By letting $D$ tend to infinity, we conclude that $m(Z) = 0$, and consequently $m(G)  = 1$.

Now notice that for $m'$-almost every $(x, \xi)$ we have
\aryst
&& \limsup \frac{1}{n} \log m_{f^{nl}(x)}(B(F^{nl}(x, \xi), \delta_l(x, \xi; n))) \\
&\leq& \limsup \frac{1}{n} \sum_{j=0}^{n-1} \log J_{l}(F^{jl}(x, \xi), \delta_l(x,\xi; j)) \\
&\leq& \int_{G(\delta_0)} ( \log J_l + l \epsilon )  dm' + \int_{G(\delta_0)^c} \log J^*_l dm' \\
\clr &\leq& \int \log J_l dm' + 5l \epsilon.
\earyst
The last inequality follows from \eqref{eq mgdelta2}, $G(\delta_2) \subset G(\delta_0)$ and \eqref{eq inajstar}.
Combine the above inequality with \eqref{eq lowerboundforlogm} and \eqref{eq lhfm} in Lemma \ref{lem jl}, we obtain
\aryst
h(F, m) - 5 \epsilon \leq (\lambda + 4\epsilon)(\dim N + \epsilon).
\earyst
Since $\epsilon$ is arbitrary, we conclude the proof of Proposition \ref{prop ineq_entropyandle}.
\end{proof}

\end{document}